\begin{document}

\title{О ГЁЛЬДЕРОВОСТИ ОТОБРАЖЕНИЙ В ОБЛАСТЯХ И НА ГРАНИЦАХ}

\author{Владимир И.\,Рязанов, Руслан Р.\,Салимов, \\ Евгений А.\,Севостьянов}
%\shorttitle{Short paper title for the headers}
%\shortauthor{F. Author, S. Author}

%Change theorem environments according to your needs...
\theoremstyle{plain}
\newtheorem{theorem}{Теорема}[section]
\newtheorem{lemma}{Лемма}[section]
\newtheorem{proposition}{Предложение}[section]
\newtheorem{corollary}{Следствие}[section]
\theoremstyle{definition}

\newtheorem{example}{Пример}[section]
\newtheorem{remark}{Замечание}[section]
\newcommand{\keywords}{\textbf{Key words.  }\medskip}
\newcommand{\subjclass}{\textbf{MSC 2000. }\medskip}
\renewcommand{\abstract}{\textbf{Аннотация.  }\medskip}
\numberwithin{equation}{section}

\setcounter{section}{0}
\renewcommand{\thesection}{\arabic{section}}
\newcounter{unDef}[section]
\def\theunDef{\thesection.\arabic{unDef}}
\newenvironment{definition}{\refstepcounter{unDef}\trivlist \item[\hskip \labelsep{\bf Определение \theunDef.}]}%
{\endtrivlist}

\maketitle

\begin{abstract}
Изучаются отображения с ветвлением области евклидового пространства.
Установлены гёльдеровость и липшицевость для одного класса
пространственных отображений, характеристика которых удовлетворяет
условию типа Дини в заданной области. Кроме того, в статье найдены
условия на комплексный коэффициент уравнений Бельтрами, имеющих
вырождение равномерной эллиптичности в единичном круге, при которых
обобщенные гомеоморфные решения этого уравнения непрерывны по
Гёльдеру в точках границы.
\end{abstract}

\section{Введение} В работе \cite{GG} получены неравенства типа Липшица для
квазиконформных отображений единичного шара в себя, в связи с чем
отметим также публикацию~\cite{Sal}. В настоящей заметке мы
распространим эти результаты на отображения, допускающие наличие
точек ветвления, при этом, характеристике квазиконформности
отображений разрешается быть неограниченной. Ниже нами использовано
достаточно общее условие, согласно которому указанная характеристика
обязана удовлетворять некоторому (довольно сильному) ограничению.
Как будет показано ниже, это ограничение является более общим, чем
\cite[условие~(1.9)]{GG} для отображений, чья интегральная
характеристика не меньше 1.

\medskip
Все отображения $f:D\rightarrow {\Bbb R}^n,$ участвующие в статье,
рассматриваются в области $D\subset {\Bbb R}^n,$ $n\geqslant 2,$ и
предполагаются непрерывными. Обозначим, как обычно, $B(x_0,
r)=\left\{x\in{\Bbb R}^n: |x-x_0|< r\right\},$ ${\Bbb B}^n := B(0,
1),$ $S(x_0,r) = \{ x\,\in\,{\Bbb R}^n : |x-x_0|=r\},$ ${\Bbb
S}^{n-1}:=S(0, 1),$ $ A(x_0, r_1, r_2)=\{ x\,\in\,{\Bbb R}^n :
r_1<|x-x_0|<r_2\}.$ Пусть $E,$ $F\subset \overline{{\Bbb R}^n}$ --
произвольные множества. В дальнейшем через $\Gamma(E,F,D)$ мы
обозначаем семейство всех кривых
$\gamma:[a,b]\rightarrow\overline{{\Bbb R}^n},$ которые соединяют
$E$ и $F$ в $D,$ т.е. $\gamma(a)\in E,\,\gamma(b)\in F$ и
$\gamma(t)\in D$ при $t\in(a,\,b).$ Пусть $M$ -- конформный модуль
семейств кривых в ${\Bbb R}^n$ (см. \cite[разд.~6]{Va}),
$r_0\,=\,{\rm dist}\, (x_0\,,\partial D),$
$Q:D\rightarrow\,[0\,,\infty]$ -- измеримая по Лебегу функция,
$S_{\,i}\,=\,S(x_0,r_i),$ $i=1,2.$
Согласно~\cite[раздел~11.3]{MRSY}, отображение $f:D\rightarrow
\overline{{\Bbb R}^n}$ называется {\it кольцевым
$Q$-отоб\-ра\-же\-нием в точке $x_0\,\in\,D,$} если соотношение
\begin{equation}\label{eq1}
 M\left(f\left(\Gamma\left(S_1,\,S_2,\,A\right)\right)\right)\leqslant
\int\limits_{A} Q(x)\cdot \eta^n(|x-x_0|)\ dm(x)
\end{equation}
выполнено для любого кольца $A=A(x_0, r_1, r_2),$\, $0<r_1<r_2< r_0$
и для каждой измеримой функции $\eta : (r_1,r_2)\rightarrow
[0,\infty ]\,$ такой, что
%
%
%
%\begin{equation}\label{eq*3}
$$\int\limits_{r_1}^{r_2}\eta(r)\,dr\geqslant 1\,.$$
%\end{equation}
%
Заметим, что все конформные и квазиконформные отображения
удовлетворяют соотношению~(\ref{eq1}) с некоторой функцией $Q,$
которая является, при этом, ограниченной (см., напр.,
\cite[теоремы~8.1, 8.6]{MRSY}). Напомним, что отображение
$f:D\rightarrow {\Bbb R}^n$ называется {\it открытым}, если образ
любого открытого множества $U\subset D$ является открытым множеством
в ${\Bbb R}^n,$ и {\it дискретным}, если прообраз
$f^{\,-1}\left(y\right)$ каждой точки $y\in{\Bbb R}^n$ состоит
только из изолированных точек. Имеет место следующая

\medskip
\begin{theorem}\label{th4}{\sl\,
Пусть $\alpha\in (0, 1],$ $r_0>0,$ $f:D\rightarrow B(0, r_0)$ --
открытое дискретное отображение, удовлетворяющее условию~(\ref{eq1})
в точке $x_0\in D.$ Предположим, что при некотором
$0<\varepsilon_0<{\rm dist}\,(x_0,
\partial D)$
\begin{equation}\label{eq11A}
\limsup\limits_{t\rightarrow
0}\int\limits_t^{\varepsilon_0}\left(\alpha-\frac{1}{q^{1/(n-1)}_{x_0}(r)}\right)\cdot
\frac{dr}{r}<+\infty\,.
\end{equation}
Тогда найдётся $C>0,$ зависящее только от $n,$ $r_0,$
$\varepsilon_0$ и функции $Q(x)$ такое, что в некоторой окрестности
$U_0\subset D$ точки $x_0,$ зависящей только от этой точки и функции
$Q,$
%
%\begin{equation}\label{eq12B}
$$|f(x)-f(x_0)|\leqslant C|x-x_0|^{\alpha}\quad \forall\,\,x\in U_0\,.$$
%\end{equation}
 }
\end{theorem}
\section{Доказательство основного результата} Пусть $h$ -- хордальная
метрика в $\overline{{\Bbb R}^n},$
%
%\begin{equation}\label{eq9}
$$
h(x,\infty)=\frac{1}{\sqrt{1+{|x|}^2}}, \ \
h(x,y)=\frac{|x-y|}{\sqrt{1+{|x|}^2} \sqrt{1+{|y|}^2}}\,, \quad x\ne
\infty\ne y\,,
$$
%\end{equation}
%
$h(C):=\sup\limits_{x,y\in C}\,h(x,y)$ -- хордальный диаметр
множества $C\subset \overline{{\Bbb R}^n}.$ Зафиксируем
$\varepsilon_0$ из условия теоремы \ref{th4}. Заметим, прежде всего,
что для каждой точки $x\in B(x_0, \varepsilon_0)$ имеет место
неравенство
\begin{equation}\label{eq2.4.3A}
 h(f(x),f(x_0))\leqslant \frac{\alpha_n}{\delta}\,
\exp\left\{-\int\limits_{|x-x_0|}^{\varepsilon_0}
\frac{dr}{rq_{x_0}^{\frac{1}{n-1}}(r)}\right\}\,, \end{equation}
где $\delta:=h\left(\overline{{\Bbb R}^n}\setminus B(0, r)\right),$
постоянная $\alpha_n$ зависит только от $n,$ а среднее значение
$q_{x_0}(r)$ функции $Q(x)$ по сфере $S(x_0, r)$ как обычно,
определено соотношением
%
%\begin{equation}\label{eq32*}
$q_{x_0}(r)=
\frac{1}{\omega_{n-1}r^{n-1}}\int\limits_{S(x_0,\,r)}Q(x)d\mathcal{H}^{n-1},$
%\end{equation}
%
где $\mathcal{H}^{n-1}$ -- $(n-1)$-мерная мера Хаусдорфа (см.
\cite[теорема~3.5.1]{Sev$_7$}). Поскольку $h(x, y)\geqslant
\frac{|x-y|}{1+r^2_0}$ при всех $x,y\in B(0, r_0),$ из
(\ref{eq2.4.3A}) получаем, что
\begin{equation}\label{eq13}
|f(x)-f(x_0)|\leqslant C_n
\exp\left\{-\int\limits_{|x-x_0|}^{\varepsilon_0}
\frac{dr}{rq_{x_0}^{\frac{1}{n-1}}(r)}\right\}\,,
\end{equation}
где постоянная $C_n$ может зависеть только от $n,$ $r_0$ и функции
$Q.$ Разделим левую и правую часть (\ref{eq13}) на
$|x-x_0|^{\alpha},$ $0<\alpha\leqslant 1.$ Тогда из (\ref{eq13})
будем иметь:
$$\frac{|f(x)-f(x_0)|}{|x-x_0|^{\alpha}}\leqslant C_n\cdot
\frac{\exp\left\{-\int\limits_{|x-x_0|}^{\varepsilon_0}
\frac{dr}{rq_{x_0}^{\frac{1}{n-1}}(r)}\right\}}{{|x-x_0|^{\alpha}}}=$$
\begin{equation}\label{eq14}
=C_n\cdot \frac{\exp\left\{-\int\limits_{|x-x_0|}^{\varepsilon_0}
\frac{dr}{rq_{x_0}^{\frac{1}{n-1}}(r)}\right\}}{{\exp\left\{-\alpha\int\limits_{|x-x_0|}^1\frac{dr}{r}\right\}}}
=\widetilde{C_n}\cdot
\exp\left\{\int\limits_{|x-x_0|}^{\varepsilon_0} \frac{\alpha
dr}{r}-\int\limits_{|x-x_0|}^{\varepsilon_0}\frac{dr}{rq_{x_0}^{\frac{1}{n-1}}(r)}\right\}=
\end{equation}
$$=\widetilde{C_n}\cdot
\exp\left\{\int\limits_{|x-x_0|}^{\varepsilon_0}\left(\alpha-\frac{1}{q^{1/(n-1)}_{x_0}(r)}\right)\cdot
\frac{dr}{r}\right\}\,,$$
где $\widetilde{C_n}=C_n/\varepsilon_0^{\alpha}.$ Однако, ввиду
условия (\ref{eq11A}) найдётся такое $M_0>0,$ зависящее только от
$n,$ $\alpha$ и функции $Q$ такое, что
$$\exp\left\{\int\limits_{|x-x_0|}^{\varepsilon_0}\left(\alpha-\frac{1}{q^{1/(n-1)}_{x_0}(r)}\right)\cdot
\frac{dr}{r}\right\}\leqslant M_0\qquad \forall\,\,x\in B(x_0,
\varepsilon_0)\setminus\{x_0\}\,.$$
Тогда из~(\ref{eq14}) вытекает соотношение $|f(x)-f(x_0)|\leqslant
C\cdot|x-x_0|^{\alpha},$
где $C=\widetilde{C_n}\cdot M_0,$ что и требовалось
установить.~$\Box$

\medskip
\begin{corollary}\label{cor1}
{\sl Если в условиях теоремы \ref{th4} вместо соотношения
(\ref{eq11A}) потребовать, что
\begin{equation}\label{eq15}
\limsup\limits_{t\rightarrow
0}\int\limits_t^{\varepsilon_0}\left(1-\frac{1}{q^{1/(n-1)}_{x_0}(r)}\right)\cdot
\frac{dr}{r}<+\infty\,,
\end{equation}
то отображение $f$ удовлетворяет оценке
%
%\begin{equation}\label{eq15A}
$$|f(x)-f(x_0)|\leqslant C|x-x_0|\quad \forall x\in B(x_0,
\varepsilon_0)\,,$$
%\end{equation}
%
другими словами, $f$ -- липшицево в точке $x_0.$ }
\end{corollary}

\medskip
Следующий результат указывает на то, что утверждение теоремы 1.8 в
\cite{GG} является частным случаем теоремы \ref{th4}, по крайней
мере, при $Q\geqslant 1$ и произвольном $n\geqslant 2,$ либо при
$Q\geqslant 0$ при $n=2.$

\medskip
\begin{corollary}\label{cor2}
{\sl\,Пусть $f:{\Bbb B}^n\rightarrow {\Bbb B}^n,$ $f(0)=0$ --
открытое дискретное отображение, удовлетворяющее условию~(\ref{eq1})
в нуле, такое, что
\begin{equation}\label{eq17}
\limsup\limits_{r\rightarrow
0}\int\limits_{r<|x|<1}\frac{Q(x)-1}{|x|^n}\,\,dm(x)<\infty\,.
\end{equation}

Тогда найдётся постоянная $C>0,$ зависящая только от $n$ и $Q,$
такая, что
$$|f(x)|\leqslant C|x|\quad\forall\,\,x\in {\Bbb B}^n\,.$$
В частности, утверждение следствия \ref{cor2} имеет место, если $f$
-- квазиконформное отображение, а $Q(x)$ -- его касательная
дилатация (см. \cite[формула~(1.5) и лемма~2.4]{GG})}.
\end{corollary}

\medskip
\begin{proof}
Из (\ref{eq17}) следует, что для произвольного $0<\varepsilon_0<1$
\begin{equation}\label{eq18}
\limsup\limits_{r\rightarrow
0}\int\limits_{r<|x|<\varepsilon_0}\frac{Q(x)-1}{|x|^n}\,\,dm(x)<\infty\,.
\end{equation}
Достаточно установить, что условие (\ref{eq18}) влечёт неравенство
(\ref{eq15}). По теореме Фубини интеграл в (\ref{eq18}) может быть
записан в виде
$$\int\limits_{r<|x|<\varepsilon_0}\frac{Q(x)-1}{|x|^n}\,\,dm(x)=
\int\limits_{r}^{\varepsilon_0}\int\limits_{S(0,
r)}\frac{Q(x)-1}{|x|^n}dSdr=$$
\begin{equation}\label{eq19}
\omega_{n-1}\cdot\int\limits_{r}^{\varepsilon_0}\frac{q_{0}(r)-1}{r}dr\,.
\end{equation}
Покажем, что почти всюду
\begin{equation}\label{eq16}
\int\limits_t^{\varepsilon_0}\left(1-\frac{1}{q^{1/(n-1)}_0(r)}\right)\cdot
\frac{dr}{r}\leqslant K\cdot
\omega_{n-1}\cdot\int\limits_{t}^{\varepsilon_0}\frac{q_{0}(r)-1}{r}dr\,,
\end{equation}
где $K=\frac{1}{(n-1)\omega_{n-1}}.$ Для этой цели сравним между
собой подинтегральные части выражений в (\ref{eq16}). Прежде всего,
напомним, что при $\lambda\geqslant -1$ имеет место следующее
неравенство Бернулли:
\begin{equation}\label{eq1C}
(1+\lambda)^n\geqslant 1+n\lambda\qquad\forall\,\,n\in {\Bbb N}\,,
\end{equation}
которое непосредственно проверяется методом математической индукции.
Далее, установим неравенство
\begin{equation}\label{eq20}
\left(1-\frac{1}{q^{1/(n-1)}_0(r)}\right)\cdot
\frac{1}{r}\leqslant\frac{1}{n-1}\cdot\frac{q_{0}(r)-1}{r}\,.
\end{equation}
В самом деле, (\ref{eq20}) эквивалентно соотношению
$$
1-\frac{1}{q^{1/(n-1)}_0(r)}\leqslant \frac{1}{n-1}\cdot
(q_{0}(r)-1)\,.
$$
При $q_0(r)=0$ или $q_0(r)=+\infty$ последнее соотношение, а значит,
и (\ref{eq20}), очевидно. Пусть теперь $0<q_0(r)<\infty.$ Обозначив
$s=q_0^{1/(n-1)}(r),$ (\ref{eq20}) можно переписать в виде
$$1-\frac{1}{s}\leqslant \frac{1}{n-1}\cdot(s^{n-1}-1)\,.$$
Домножая последнее соотношение на $s,$ получаем
$$s-1\leqslant \frac{1}{n-1}\cdot(s^n-s)\,,$$
что в свою очередь неравенствам
$$(n-1)(s-1)\leqslant s^n-s$$
и
\begin{equation}\label{eq2C}
s^n\geqslant ns-n+1\,.
\end{equation}
Но (\ref{eq2C}) есть соотношение (\ref{eq1C}), в котором положено
$s=\lambda+1.$ Таким образом, соотношение (\ref{eq2C}) имеет место,
а значит, выполняется эквивалентное ему неравенство (\ref{eq20}).
Однако, из (\ref{eq20}) немедленно следует (\ref{eq16}). Комбинируя
(\ref{eq19}) и (\ref{eq16}), мы получаем, что
$$\int\limits_t^{\varepsilon_0}\left(1-\frac{1}{q^{1/(n-1)}_{x_0}(r)}\right)\cdot
\frac{dr}{r}\leqslant K\cdot
\int\limits_{t<|x|<\varepsilon_0}\frac{Q(x)-1}{|x|^n}\,\,dm(x)\,.$$
Оставшаяся часть утверждения вытекает из последнего соотношения с
учётом предположения в (\ref{eq17}) и следствия \ref{cor1}.~$\Box$
\end{proof}

\medskip
\section{Об отображениях, характеристика которых имеет конечное среднее
значение} В монографии \cite[лемма~6.1]{MRSY} установлены некоторые
полезные свойства функций конечного среднего колебания, которые в
дальнейшем используются при решении различных вопросов теории
отображений (см. напр., \cite{BGR}, \cite{GRY*} и \cite{KRSS}). В
настоящем разделе мы рассмотрим функции с более жёстким
ограничением, когда их среднее значение по малым шарам конечно. Как
будет видно, такие условия существенно <<более выгодны>>, поскольку
их использование позволяет получить качественно более сильные
результаты.

Используя подход, применённый при доказательстве леммы~6.1 в
\cite{MRSY}, мы установим далее некоторое интегральное соотношение
для функций с конечным средним значением по шарам с некоторым
<<весом>>. Затем указанный результат будет применён для решения
проблем локального и граничного поведения одного класса отображений.
Следует отметить, что основные результаты здесь получаются как
следствия из этого свойства и наших более ранних результатов.

\medskip
Всюду далее $h$ -- хордальное расстояние в $\overline{{\Bbb
R}^n}={\Bbb R}^n\cup\{\infty\}$ (см.~\cite[определение~12.1]{Va}).
Имеют место следующие утверждения.

\medskip
\begin{theorem}\label{th1A}
{\sl Пусть $f:D \rightarrow \overline{{\Bbb R}^n}$ --
го\-ме\-о\-мор\-физм, удовлетворяющий условию~(\ref{eq1}) в точке
$x_0\in D,$ такой, что $h\left(\overline{{\Bbb R}^n}\setminus
f\left(D\right)\right)\geqslant\delta>0.$ Предположим, найдётся
постоянная $0<C<\infty$ такая, что
\begin{equation}\label{eq1D}
\limsup\limits_{\varepsilon\rightarrow
0}\frac{1}{\Omega_n\cdot\varepsilon^n}\int\limits_{B(x_0,
\varepsilon)}Q(x)\,dm(x)\leqslant C\,.
\end{equation}
Тогда найдётся $\varepsilon(x_0)\in (0, \infty)$ такое, что для всех
$x\in B(x_0, \varepsilon(x_0))$ имеет место неравенство
\begin{equation}\label{eq2.4.3}
 h(f(x),f(x_0))\leqslant \alpha_n\cdot |x-x_0|^{\beta_n}\,,\end{equation}
где постоянная $\alpha_n$ зависит только от $n$ и $\delta,$ а
$\beta_n$ -- только от $n$ и постоянной $C$ в~(\ref{eq1D}).}
\end{theorem}

\begin{theorem}\label{th2A}
{\sl Пусть $f:D \rightarrow B(0, r),$ $r>0,$ -- открытое дискретное
отображение, удовлетворяющее условию~(\ref{eq1}) в точке $x_0\in D.$
Предположим, найдётся постоянная $0<C<\infty$ такая, что выполнено
соотношение (\ref{eq1D}).
Тогда найдётся $\varepsilon(x_0)\in (0, \infty)$ такое, что для всех
$x\in B(x_0, \varepsilon(x_0))$ имеет место
неравенство~(\ref{eq2.4.3}), где постоянная $\alpha_n$ зависит
только от $n$ и $r,$ а $\beta_n$ -- только от $n$ и постоянной $C$ в
(\ref{eq1D}).}
\end{theorem}

Мы докажем теоремы~\ref{th1A} и \ref{th2A} после того, как будет
доказано ключевое утверждение, сформулированное ниже. Пусть $a>0$ и
$\varphi\colon [a,\infty)\rightarrow[0,\infty)$~--- неубывающая
функция, такая что при некоторых постоянных $\gamma>0,$ $T>0$ и всех
$t\geqslant T$ выполнено неравенство вида
\begin{equation}\label{eq1B}
\varphi(2t)\leqslant \gamma\cdot\varphi(t)\,.
\end{equation}
Несложные примеры таких функций хорошо известны: 1)
$\varphi(t)=t^{\alpha},$ $\alpha\geqslant 0;$ 2) $\varphi(t)=\log
t;$ 3) $\varphi(t)=t^{\alpha}+\log^{\beta}t$ и
$\varphi(t)=t^{\alpha}\log^{\beta}t,$ где $\alpha, \beta\geqslant
0.$ Вообще, можно указать достаточно большое количество функций с
указанным свойством (см.~\cite[разд.~4, \S\,4, гл.~I]{KrRu}).
Условимся называть такие функции {\it функциями, удовлетворяющими
условию удвоения}.

Пусть теперь $\varphi\colon [a,\infty)\rightarrow[0,\infty)$ --
функция с условием удвоения, тогда функция
$\widetilde{\varphi}(t):=\varphi(1/t)$ не возрастает и определена на
полуинтервале $(0, 1/a].$ Имеет место следующее ключевое
утверждение.

\medskip
\begin{lemma}\label{lem1A}
{\sl Пусть %$x_0\in \overline{D},$ $n\geqslant 2,$ $D$ -- область в
%${\Bbb R}^n,$ удовлетворяющая условию удвоения меры в точке $x_0,$ и
$a>0,$ $\varphi\colon [a,\infty)\rightarrow[0,\infty)$ --
неубывающая функция с условием удвоения (\ref{eq1B}), $x_0\in {\Bbb
R}^n,$ $n\geqslant 2,$ и пусть $Q:{\Bbb R}^n\rightarrow [0, \infty]$
-- измеримая по Лебегу функция, удовлетворяющая при некотором
$0<C<\infty$ условию
\begin{equation}\label{eq1AA}
\limsup\limits_{\varepsilon\rightarrow
0}\frac{\varphi(1/\varepsilon)}{\Omega_n\cdot\varepsilon^n
}\int\limits_{B(x_0, \varepsilon)}Q(x)\,dm(x)\leqslant C\,.
\end{equation}
%
%где $D(x_0, \varepsilon)=B(x_0, \varepsilon)\cap D.$
Тогда найдётся $\varepsilon_0>0$ такое, что
%
%\begin{equation}\label{eq2B}
$$\int\limits_{\varepsilon<|x-x_0|<\varepsilon_0}
\frac{\varphi(1/|x-x_0|)Q(x)\,dm(x)}{|x-x_0|^n}\leqslant
C_1\cdot\left(\log\frac{1}{\varepsilon}\right)\,,\qquad\varepsilon\rightarrow
0\,,$$
где $C_1:=\frac{\gamma C\Omega_n2^n}{\log 2}.$
%\end{equation}
}
\end{lemma}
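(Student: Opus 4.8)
The plan is a dyadic decomposition of the spherical ring $\{\varepsilon<|x-x_0|<\varepsilon_0\}$, estimating each dyadic layer separately by means of the hypothesis~(\ref{eq1AA}) together with the monotonicity and the doubling property~(\ref{eq1B}) of $\varphi$.

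First I would fix $\varepsilon_0>0$ small. By~(\ref{eq1AA}) it may be chosen so that
$$
\varphi\left(\frac{1}{\delta}\right)\int\limits_{B(x_0,\delta)}Q(x)\,dm(x)\leqslant C\,\Omega_n\,\delta^{\,n}\qquad\text{for all }\ 0<\delta\leqslant\varepsilon_0
$$
(to be scrupulous, $C$ here may be taken to be any fixed constant strictly larger than the one in~(\ref{eq1AA}), which changes $C_1$ by an inessential amount), and in addition I would require $\varepsilon_0\leqslant\min\{1/a,\,1/T,\,1/2\}$: the bound $\varepsilon_0\leqslant1/a$ guarantees that $\varphi$ is evaluated only at arguments $\geqslant a$ below, the bound $\varepsilon_0\leqslant1/T$ makes~(\ref{eq1B}) applicable at those arguments, and $\varepsilon_0\leqslant1/2$ will control the number of dyadic layers. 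Put $\varepsilon_k:=2^{-k}\varepsilon_0$ and $A_k:=\{x:\varepsilon_{k+1}\leqslant|x-x_0|<\varepsilon_k\}$ for $k=0,1,2,\dots$. Given $\varepsilon\in(0,\varepsilon_0)$, let $N$ be the least integer with $\varepsilon_N\leqslant\varepsilon$; then $\{\varepsilon<|x-x_0|<\varepsilon_0\}\subset A_0\cup\dots\cup A_{N-1}$, and since $\varepsilon_0\leqslant1/2$ one has $N<\log_2(\varepsilon_0/\varepsilon)+1\leqslant\log_2(1/\varepsilon)=\frac{1}{\log2}\log\frac{1}{\varepsilon}$.

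Next I would estimate one layer $A_k$. On $A_k$ we have $|x-x_0|^{-n}\leqslant\varepsilon_{k+1}^{-n}=2^{n}\varepsilon_k^{-n}$, and, since $\varphi$ is non-decreasing, $\varphi(1/|x-x_0|)\leqslant\varphi(1/\varepsilon_{k+1})=\varphi(2/\varepsilon_k)$. As $1/\varepsilon_k\geqslant1/\varepsilon_0\geqslant T$, the doubling property~(\ref{eq1B}) gives $\varphi(2/\varepsilon_k)\leqslant\gamma\,\varphi(1/\varepsilon_k)$. Using $A_k\subset B(x_0,\varepsilon_k)$ and then the displayed consequence of~(\ref{eq1AA}) with $\delta=\varepsilon_k$,
$$
\int\limits_{A_k}\frac{\varphi(1/|x-x_0|)\,Q(x)}{|x-x_0|^{n}}\,dm(x)\leqslant\frac{2^{n}\gamma\,\varphi(1/\varepsilon_k)}{\varepsilon_k^{n}}\int\limits_{B(x_0,\varepsilon_k)}Q(x)\,dm(x)\leqslant 2^{n}\gamma\,\Omega_n\,C.
$$
Summing over $k=0,\dots,N-1$ and using $N\leqslant\frac{1}{\log2}\log\frac1\varepsilon$,
$$
\int\limits_{\varepsilon<|x-x_0|<\varepsilon_0}\frac{\varphi(1/|x-x_0|)\,Q(x)}{|x-x_0|^{n}}\,dm(x)\leqslant N\cdot2^{n}\gamma\,\Omega_n\,C\leqslant\frac{\gamma C\Omega_n 2^{n}}{\log2}\log\frac{1}{\varepsilon},
$$
which is the asserted bound with $C_1=\frac{\gamma C\Omega_n 2^{n}}{\log2}$.

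I do not expect a genuine obstacle here — this is a routine dyadic splitting. The one point deserving attention is the order in which the two properties of $\varphi$ are used on a layer $A_k$: monotonicity lets us replace $\varphi(1/|x-x_0|)$ by its value $\varphi(2/\varepsilon_k)$ at the inner radius of $A_k$, but since the normalization in~(\ref{eq1AA}) is keyed to the radius $\varepsilon_k$ of the ball, we must then pull $\varphi$ back from $2/\varepsilon_k$ to $1/\varepsilon_k$, which is exactly what the doubling inequality~(\ref{eq1B}) provides (at the cost of the factor $\gamma$). The remaining care is purely bookkeeping: taking $\varepsilon_0$ below $1/a$, $1/T$ and $1/2$, and counting the dyadic layers so that $N\leqslant\log_2(1/\varepsilon)$.
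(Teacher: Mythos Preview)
Your proof is correct and essentially identical to the paper's own argument: both fix $\varepsilon_0$ small (below $1/a$, $1/T$, and $1/2$), perform the same dyadic decomposition $\varepsilon_k=2^{-k}\varepsilon_0$, estimate each annulus $A_k$ via monotonicity of $\varphi$ followed by the doubling inequality~(\ref{eq1B}) and then the ball bound from~(\ref{eq1AA}), and finally count the layers using $\varepsilon_0\leqslant1/2$ to get $N\leqslant\frac{1}{\log 2}\log(1/\varepsilon)$. Your remark that the constant $C$ may need to be replaced by any fixed number strictly larger than the $\limsup$ is in fact more careful than the paper, which silently passes through an auxiliary supremum $\delta$ in~(\ref{eq3A}) and then identifies it with~$C$ in the final constant.
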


\begin{proof}
Возьмём за основу подход, использованный при доказательстве
\cite[лемма~6.1]{MRSY}. Не ограничивая общности, можно в дальнейшем
положить $x_0=0.$ Так как по условию выполнено
соотношение~(\ref{eq1AA}), то найдётся $\varepsilon_0>0$ такое, что
$Q\in L^1(B(0, \varepsilon_0)).$ Можно считать, что
$1/a<\varepsilon_0,$ где $a$ -- число из области определения функции
$\varphi$ и, в частности, число $1/T$ из (\ref{eq1B}) также меньше
$\varepsilon_0.$

\medskip
Заметим также, что ввиду соотношения~(\ref{eq1AA})
\begin{equation}\label{eq3A}
\delta:=\sup\limits_{r\in (0,
\varepsilon_0)}\frac{\varphi(1/r)}{\Omega_n\cdot
r^n}\int\limits_{B(0, r)}Q(x)\,dm(x)<\infty\,.
\end{equation}
%
%где $D(0, r)=B(0, r)\cap D.$
Пусть теперь $\varepsilon<2^{\,-1}\varepsilon_0,$
$\varepsilon_k=2^{\,-k+1}\varepsilon_0,$ $A_k=\{ x\in{\Bbb R}^n:
\varepsilon_{k+1}\leqslant |x|<\varepsilon_k\},$ $B_k=B(0,
\varepsilon_k).$ %$D_k=B_k\cap D.$
Пусть также $N$ -- натуральное число такое, что
$\varepsilon\in[\varepsilon_{N+1},\varepsilon_N).$ Обозначим
$$\alpha(t):=t^{\,-n}\cdot\varphi(1/t),\quad 0<t<\min\{1, a\}\,,$$$$
A(\varepsilon, \varepsilon_0)=\{x\in {\Bbb R}^n:
\varepsilon<|x|<\varepsilon_0\}\,.$$
Заметим, что
$$A(\varepsilon,
\varepsilon_0)\subset A(\varepsilon_{N+1}, \varepsilon_0)
=\bigcup\limits_{k=1}^NA_k\,.$$
%
%где $\Delta_k=D\cap A_k.$
Поскольку $A_k\subset B_k,$
\begin{equation}\label{eq5A}
|x|^{\,-n}\leqslant \Omega_n2^n/m(B_k)\qquad\forall\,\, x\in A_k\,,
\end{equation}
где, как обычно, $\Omega_n$ обозначает меру единичного шара ${\Bbb
B}^n$ в ${\Bbb R}^n.$ Кроме того, для произвольного $x\in A_k$ с
учётом условия удвоения (\ref{eq1B}), будем иметь: $|x|^n\geqslant
2^{-n}\varepsilon_k^n,$ откуда $|x|\geqslant \varepsilon_k/2$ и
\begin{equation}\label{eq7}
\varphi(1/|x|)\leqslant \varphi(2/\varepsilon_k)\leqslant
\gamma\cdot\varphi(1/\varepsilon_k)\,,\qquad\forall\,\, x\in A_k\,.
\end{equation}
Учитывая соотношения (\ref{eq5A}) и (\ref{eq7}), а также определение
числа $\delta$ в (\ref{eq3A}), мы получим, что
$$\eta(\varepsilon):=\int\limits_{A(\varepsilon_{N+1}, \varepsilon_0)}Q(x)\alpha(|x|)\,dm(x)=
\sum\limits_{k=1}^N\int\limits_{A_k}Q(x)\varphi(1/|x|)|x|^{\,-n}\,dm(x)\leqslant$$
\begin{equation}\label{eq4A}
\leqslant\gamma\Omega_n2^n
\sum\limits_{k=1}^N\frac{\varphi(1/\varepsilon_k)}{m(B_k)}\int\limits_{A_k}Q(x)\,dm(x)\leqslant
\end{equation}
$$\leqslant \gamma\Omega_n2^n
\sum\limits_{k=1}^N\frac{\varphi(1/\varepsilon_k)}{\Omega_n\cdot
\varepsilon_k^n}\int\limits_{B_k}Q(x)\,dm(x)\leqslant
\gamma\Omega_n2^nN\delta\,.$$
Пусть $\varepsilon_0\in (0,2^{\,-1}).$ Поскольку
$\varepsilon<\varepsilon_N$ по выбору $N,$ то
\begin{equation}\label{eq6A}
 N\ <\ N\ +\ \log_2\ \left( \frac{1}{2\varepsilon_0} \right)\ =\
\log_2\ \frac{1}{\varepsilon_N}\ <\ \log_2\
\frac{1}{\varepsilon}=\frac{\log\frac{1}{\varepsilon}}{\log 2}\,.
\end{equation}
Объединяя (\ref{eq4A}) и (\ref{eq6A}), мы получим, что
$$\eta(\varepsilon):=\int\limits_{A(\varepsilon_{N+1}, \varepsilon_0)}Q(x)\alpha(|x|)\,dm(x)\leqslant
\frac{\gamma\delta\Omega_n2^n}{\log
2}\cdot\log\frac{1}{\varepsilon}\,.$$
Обозначив $C_1:=\frac{\gamma C\Omega_n2^n}{\log 2}$ и вспоминая о
том, что $\alpha(t)=t^{-n}\cdot\varphi(1/t),$ последнее соотношение
можно переписать в виде
$$\int\limits_{A(\varepsilon_{N+1}, \varepsilon_0)}\frac{\varphi(1/|x|)Q(x)}{|x|^n}\,dm(x)\leqslant
C_1\cdot\log\frac{1}{\varepsilon}\,,$$
откуда, поскольку $A(\varepsilon, \varepsilon_0)\subset
A(\varepsilon_{N+1}, \varepsilon_0),$ следует, что
$$\int\limits_{\varepsilon<|x|<\varepsilon_0}\frac{\varphi(1/|x|)Q(x)}{|x|^n}\,dm(x)\leqslant
C_1\cdot\log\frac{1}{\varepsilon}\,.$$
Поскольку $\varepsilon\in (0, \varepsilon_0)$ -- произвольно,
утверждение леммы установлено.~$\Box$
\end{proof}

{\it Доказательство} теоремы \ref{th1A} вытекает из леммы
\ref{lem1A} и \cite[лемма~7.6]{MRSY} при $\varphi(t)\equiv 1,$
$\psi(t)=1/t;$ теоремы \ref{th2A} -- из леммы \ref{lem1A} и
\cite[лемма~5]{Sev$_1$} при $\varphi(t)\equiv 1,$
$\psi(t)=1/t.$~$\Box$

\medskip
Основные результаты заметки могут быть применены в области
дифференциальных уравнений, в частности, уравнений Бельтрами на
плоскости (см., напр., \cite{BGR}, \cite{GRY*}).

\section{О граничном поведении решений уравнений Бельтрами}

В недавних работах, посвящённых изучению краевых задач для уравнения
Бельтрами в анизотропных и неоднородных средах, использовалась
логарифмическая ёмкость (см., например, \cite{GRY}--\cite{RE}). Как
известно, логарифмическая ёмкость совпадает с так называемым
трансфинитным диаметром множества. Из этой геометрической
характеристики следует, что множества нулевой ёмкости (в частности,
функции, измеримые относительно логарифмической ёмкости) инвариантны
при непрерывных по Гёльдеру отображениях. Это обстоятельство
является мотивировкой нашего исследования, которому посвящён данный
раздел статьи.

\medskip
В дальнейшем, $D$ --  область комплексной плоскости ${\Bbb C},$
т.е., связное открытое подмножество ${\Bbb C}$. Пусть $\mu\colon
D\rightarrow {\Bbb C}$ -- измеримая функция такая, что $|\mu(z)|<1$
почти всюду в $D.$ {\it Уравнением Бельтрами} называется уравнение
вида
\begin{equation}\label{eq1E}
f_{\bar z}=\mu(z)\,f_z\,,
\end{equation}
где $f_{\bar z}=\overline{\partial}f=(f_x+if_y)/2$, $f_{z}=\partial
f=(f_x-if_y)/2$, $z=x+iy$, $f_x$ и $f_y$ частные производные $f$ по
$x$ и $y$, соответственно. Функция $\mu$ называется {\it комплексной
дилатацией}, а
\begin{equation}\label{eq5}
K_{\mu}(z)=\frac{1+|\mu(z)|}{1-|\mu(z)|}
\end{equation}
-- {\it максимальной дилатацией} уравнения~(\ref{eq1E}). {\it
Якобиан} $J_f(z)$ сохраняющего ориентацию и отображения $f$ в точке
$z,$ имеющего частные производные в этой точке, вычисляется по
правилу $J_f(z)=|f_z|^2-|f_{\overline{z}}|^2.$ В дальнейшем запись
$K_{\mu}(z)$ может использоваться в следующих двух значениях: если
задана функция $\mu,$ то мы вычисляем $K_{\mu}$ посредством
формулы~(\ref{eq5}); если же задано отображение $f,$ имеющее частные
производные, то мы вычисляем $\mu=\mu_f$ по правилу
$$
\mu(z)=\mu_f(z)=\left \{\begin{array}{rr}  |f_z|/|f_{\overline{z}}| , & f_z\ne 0\,,   \\
0, &  f_z=0
\end{array} \right.
$$
при этом, согласно формуле~(\ref{eq5}), полагаем:
$K_{\mu}(z)=K_{\mu_f}(z).$ Уравнение~(\ref{eq1E}) называется {\it
вырожденным}, если ${\rm ess}\,{\rm sup}\,K_{\mu}=\infty.$
Существование гомеоморфных решений класса Соболева  $W^{1,1}_{\rm
loc}$ установлено для вырожденных уравнений Бельтрами при
соответствующих условиях на $K_{\mu}$, см., напр., монографии
\cite{MRSY} и \cite{GRSY}. В дальнейшем используются следующие
стандартные обозначения для кругов и окружностей в комплексной
плоскости:
$${\Bbb D}:=D(0, 1), \quad  D(z_0, r):=\left\{z\in{\Bbb C}:
|z-z_0|< r\right\}\,.$$
Важнейшим инструментом исследования уравнений Бе\-ль\-тра\-ми
является метод модулей. Следующее утверждение связывает решения
этого уравнения с классом кольцевых $Q$-отображений,
см.~\cite[теорема~3.1]{LSS}, \cite[теорема~5.3]{KSS} и
\cite[теорема~1]{KPR}.

\medskip
\begin{proposition}\label{pr1}{\sl Пусть $D$ и  $D^{\,\prime}$ ---
области в ${\Bbb C}$ и $f:D\rightarrow D^{\,\prime}$ ---
гомеоморфное решение уравнения~(\ref{eq1E}) такое, что $f\in
W^{1,1}_{\rm loc}$ и $K_{\mu}\in L^1_{\rm loc}(D)$. Тогда $f$
является кольцевым $Q$-гомеоморфизмом в каждой точке $z_0\in D$ при
$Q(z)=K_{\mu}(z)$.}
\end{proposition}

\medskip
Следующее утверждение относится к вопросу о непрерывном продолжении
решений~(\ref{eq1E}) на единичную окружность.
Согласно~\cite[следствие~7.4]{RSSY}, имеет место следующее

\medskip
\begin{proposition}\label{pr2}
{\sl Пусть $\mu:{\Bbb D} \rightarrow  {\Bbb C}$ --- измеримая в
$\Bbb D$ функция такая, что $|\mu(z)|<1$ п.в., и пусть также
$f:{\Bbb D}\rightarrow{\Bbb D}$  --- гомеоморфное решение
уравнения~(\ref{eq1E}), принадлежащее $W_{\rm loc}^{1,1}$. Если для
всех $\zeta\in\partial \Bbb D$ выполнено условие
%
%\begin{equation}\label{eq1AB}
$$\limsup\limits_{\varepsilon\rightarrow
0}\frac{1}{\pi\varepsilon^2}\int\limits_{{\Bbb D}\cap D(\zeta,
\varepsilon) }K_{\mu}(z)\, dm(z)<\infty\,,
$$
%\end{equation}
то $f$ имеет гомеоморфное продолжение  $f:\overline{\Bbb D}\rightarrow\overline{\Bbb D}.$}
\end{proposition}

\medskip
Нам также понадобится результат о локальном поведении кольцевых
$Q$-гомеоморфизмов во внутренних точках области,
см.~\cite[следствие~4.1]{LSS}.

\medskip
\begin{proposition}\label{pr3} {\sl Пусть $D$  и $D^{\,\prime}$  ---
области в $\Bbb C$,  $Q: D\rightarrow [0,\infty]$ --- измеримая по
Лебегу функция, и пусть $f:D\rightarrow D^{\,\prime}$ --- кольцевой
$Q$-гомеоморфизм в точке $z_0\in D$ такой, что $h(\overline{{\Bbb
C}}\setminus f(D))\geqslant \Delta>0.$ Предположим, что существуют
$0<\varepsilon_0< {\rm dist}(z_0, \partial D)$ и измеримая по Лебегу
функция $\psi:(0,\infty)\rightarrow [0,\infty],$ удовлетворяющая
условию
$$0<I(\varepsilon):=\int\limits_{\varepsilon}^{\varepsilon_0}\,
\psi(t)\,dt <\infty \qquad \forall\,\,\varepsilon\in(0,
\varepsilon_0)\,,$$
при этом,
\begin{equation}\label{eq2}
\int\limits_{A(z_0, \varepsilon, \varepsilon_0)}
Q(z)\,\psi^2(|z-z_0|)\,dm(z)\,\leqslant C\cdot I(\varepsilon)\qquad
\forall\,\,\varepsilon\in(0, \varepsilon_0)\,.
\end{equation}
Тогда для всех $z\in D(z_0, \varepsilon_0)$
%
%\begin{equation}\label{eq1AC}
$$h( f(z), f(z_0))\leqslant \frac{32}{\Delta}\exp \left\{-
\frac{2\pi}{C}I(|z-z_0|)\right\}\,.$$
%\end{equation}
} \end{proposition}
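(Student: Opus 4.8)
The plan is the modulus method: bound the modulus of one distinguished curve family from above by the defining inequality~(\ref{eq1}) of a ring $Q$-homeomorphism, bound it from below by a classical geometric estimate, and compare the two. Fix $z\in D(z_0,\varepsilon_0)$ with $z\ne z_0$ (the case $z=z_0$ being trivial, as the left-hand side then vanishes) and put $\varepsilon:=|z-z_0|\in(0,\varepsilon_0)$, $S_1:=S(z_0,\varepsilon)$, $S_2:=S(z_0,\varepsilon_0)$, $A:=A(z_0,\varepsilon,\varepsilon_0)$, $\Gamma:=\Gamma(S_1,S_2,A)$. The function $\eta(t):=\psi(t)/I(\varepsilon)$ is admissible for $A$ in~(\ref{eq1}), since $\int_{\varepsilon}^{\varepsilon_0}\eta(t)\,dt=1$ by the very definition of $I(\varepsilon)$ and $0<I(\varepsilon)<\infty$ by hypothesis. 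Substituting $\eta$ into~(\ref{eq1}) (with $n=2$, $x_0=z_0$) and using hypothesis~(\ref{eq2}) I obtain
$$M\bigl(f(\Gamma)\bigr)\ \le\ \frac{1}{I(\varepsilon)^{2}}\int\limits_{A(z_0,\varepsilon,\varepsilon_0)}Q(w)\,\psi^{2}(|w-z_0|)\,dm(w)\ \le\ \frac{C}{I(\varepsilon)}\,.$$

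For the lower bound I pass to images of disks. Since $f$ is a homeomorphism of $D$ onto $f(D)$ and $\overline{D(z_0,\varepsilon_0)}\subset D$, the set $E_0:=f\bigl(\overline{D(z_0,\varepsilon)}\bigr)$ is a non-degenerate continuum containing $f(z_0)$ and $f(z)$, so $h(E_0)\ge h(f(z),f(z_0))$; the set $E_1:=\overline{{\Bbb C}}\setminus f(D(z_0,\varepsilon_0))$ is the complement of a simply connected domain, hence a continuum, and it contains $\overline{{\Bbb C}}\setminus f(D)$, so $h(E_1)\ge h(\overline{{\Bbb C}}\setminus f(D))\ge\Delta$; moreover $E_0\cap E_1=\emptyset$. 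The key topological fact is that $\partial f(D(z_0,\varepsilon_0))=f(S_2)$ (which holds because $f$ is a homeomorphism and $\overline{D(z_0,\varepsilon_0)}$ is compact in $D$), so any curve joining $E_0$ to $E_1$ in $\overline{{\Bbb C}}$ must exit $f(D(z_0,\varepsilon_0))$ through $f(S_2)$ and therefore contains a subcurve joining $f(S_1)$ to $f(S_2)$ inside $f(A)$; thus $f(\Gamma)$ minorizes $\Gamma(E_0,E_1,\overline{{\Bbb C}})$, whence
$$M\bigl(f(\Gamma)\bigr)\ \ge\ M\bigl(\Gamma(E_0,E_1,\overline{{\Bbb C}})\bigr)\ \ge\ \frac{2\pi}{\log\dfrac{32}{h(E_0)\,h(E_1)}}\ \ge\ \frac{2\pi}{\log\dfrac{32}{\Delta\,h(f(z),f(z_0))}}\,,$$
the middle inequality being the well-known lower estimate for the conformal modulus of the family of curves joining two disjoint non-degenerate continua in $\overline{{\Bbb C}}$ expressed through their chordal diameters (the logarithms are positive since $h(E_0)h(E_1)\le 1$).

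Comparing the two displays gives $\dfrac{2\pi}{\log\frac{32}{\Delta\,h(f(z),f(z_0))}}\le\dfrac{C}{I(\varepsilon)}$, i.e.\ $\dfrac{2\pi}{C}\,I(\varepsilon)\le\log\dfrac{32}{\Delta\,h(f(z),f(z_0))}$, and exponentiating yields precisely $h(f(z),f(z_0))\le\dfrac{32}{\Delta}\exp\left\{-\dfrac{2\pi}{C}\,I(|z-z_0|)\right\}$, as asserted. I expect the only genuinely non-routine ingredient to be the geometric modulus lower bound for two continua, together with the topology underlying the subcurve extraction — in particular the identity $\partial f(D(z_0,\varepsilon_0))=f(S_2)$ and the connectedness of $E_1$; granting those, the remainder is a direct substitution into~(\ref{eq1}) followed by elementary arithmetic.
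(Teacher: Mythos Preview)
The paper does not supply its own proof of this proposition; it is quoted from \cite[Proposition~4.1]{LSS}. Your argument is the standard modulus-method proof that underlies that reference and is correct: the choice $\eta=\psi/I(\varepsilon)$ in~(\ref{eq1}) gives the upper bound $M(f(\Gamma))\le C/I(\varepsilon)$, while the lower bound is the classical estimate $M(\Gamma(E_0,E_1,\overline{\Bbb C}))\ge 2\pi\big/\log\bigl(2\lambda_2^{2}/(h(E_0)h(E_1))\bigr)$ with the planar Gr\"otzsch constant $\lambda_2=4$ (hence the~$32$), combined with the minorization $\Gamma(E_0,E_1,\overline{\Bbb C})>f(\Gamma)$ that you justify via $\partial f(D(z_0,\varepsilon_0))=f(S_2)$. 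This is exactly the route taken in~\cite{LSS} and, in the $n$-dimensional form, in \cite[Lemma~7.5--7.6]{MRSY}.
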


\medskip
Полагая $\psi(t)=\frac{1}{t}$ в формулировке предложения~\ref{pr3},
получаем следующее

\medskip
\begin{corollary}\label{cor3}
{\sl Предположим, что в условиях предложения~\ref{pr3} вместо
соотношения~(\ref{eq2}) имеет место условие
\begin{equation}\label{eq3}
\int\limits_{A(z_0, \varepsilon, \varepsilon_0)}
\frac{Q(z)}{|z-z_0|^2}\, dm(z)\,\leqslant C
\log\left(\frac{\varepsilon_0}{\varepsilon}\right)\qquad
\forall\,\,\varepsilon\in(0, \varepsilon_0)\,. \end{equation}
Тогда для всех $z\in D(z_0, \varepsilon_0)$
%
%\begin{equation}\label{eq1AD}
$$h( f(z), f(z_0))\leqslant \frac{32}{\Delta}\varepsilon_0^{-2\pi/C}
|z-z_0|^{2\pi/C}\,.$$
%\end{equation}
 }
\end{corollary}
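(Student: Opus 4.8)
The plan is to obtain Corollary~\ref{cor3} as a direct specialization of Proposition~\ref{pr3}, taking the weight $\psi(t):=1/t$ on $(0,\infty)$. First I would check that this $\psi$ is an admissible choice in Proposition~\ref{pr3}: it is a non-negative Lebesgue measurable function on $(0,\infty)$, and for every $\varepsilon\in(0,\varepsilon_0)$
$$I(\varepsilon)=\int\limits_{\varepsilon}^{\varepsilon_0}\psi(t)\,dt=\int\limits_{\varepsilon}^{\varepsilon_0}\frac{dt}{t}=\log\frac{\varepsilon_0}{\varepsilon}\in(0,\infty),$$
so the requirement $0<I(\varepsilon)<\infty$ is satisfied. (On the truncated interval $(\varepsilon,\varepsilon_0)$ the function $1/t$ is integrable; it is only the divergence of $\int_0^{\varepsilon_0}dt/t$ that is exploited below, at $z=z_0$.)

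Next I would observe that with this $\psi$ the hypothesis~(\ref{eq2}) of Proposition~\ref{pr3} reads
$$\int\limits_{A(z_0,\varepsilon,\varepsilon_0)}Q(z)\,\psi^2(|z-z_0|)\,dm(z)=\int\limits_{A(z_0,\varepsilon,\varepsilon_0)}\frac{Q(z)}{|z-z_0|^2}\,dm(z)\leqslant C\,I(\varepsilon)=C\log\frac{\varepsilon_0}{\varepsilon},$$
which is precisely the hypothesis~(\ref{eq3}) of the corollary. Thus, under the assumptions of the corollary all hypotheses of Proposition~\ref{pr3} are met, and that proposition yields, for every $z\in D(z_0,\varepsilon_0)$,
$$h(f(z),f(z_0))\leqslant\frac{32}{\Delta}\exp\left\{-\frac{2\pi}{C}\,I(|z-z_0|)\right\}=\frac{32}{\Delta}\exp\left\{-\frac{2\pi}{C}\log\frac{\varepsilon_0}{|z-z_0|}\right\}.$$

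It then remains only to simplify the right-hand side via the elementary identity $\exp\{-\lambda\log(\varepsilon_0/t)\}=(\varepsilon_0/t)^{-\lambda}=\varepsilon_0^{-\lambda}t^{\lambda}$ with $\lambda=2\pi/C>0$, which gives
$$h(f(z),f(z_0))\leqslant\frac{32}{\Delta}\,\varepsilon_0^{-2\pi/C}\,|z-z_0|^{2\pi/C}\qquad\forall\,z\in D(z_0,\varepsilon_0),$$
the asserted inequality. At $z=z_0$ both sides vanish, since $2\pi/C>0$, consistently with $I(0)=+\infty$.

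The honest assessment is that there is no real obstacle: the corollary is purely a matter of substituting a convenient $\psi$ into a stronger statement already in hand, and the only two points needing a line of verification are (i) the admissibility $0<I(\varepsilon)<\infty$ on $(0,\varepsilon_0)$ for $\psi(t)=1/t$, and (ii) the $\log$--$\exp$ bookkeeping above. If desired, one could append the remark that the resulting bound exhibits $f$ as Hölder continuous at $z_0$ with exponent $2\pi/C$, and, in particular, Lipschitz at $z_0$ as soon as $C\leqslant 2\pi$.
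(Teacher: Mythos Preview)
Your proof is correct and follows exactly the paper's own approach: the paper simply states that the corollary is obtained by taking $\psi(t)=1/t$ in Proposition~\ref{pr3}, and you have carried out that substitution in detail, verifying that $I(\varepsilon)=\log(\varepsilon_0/\varepsilon)$ and that condition~(\ref{eq2}) becomes~(\ref{eq3}). There is nothing to add or correct.
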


\medskip
Следующая лемма связывает среднее значение функции по кругу и
интеграл, участвующий в левой части неравенства~(\ref{eq3}).

\medskip
\begin{lemma}\label{lem2}
{\sl Пусть  $Q:{\Bbb C}\rightarrow [0, \infty]$
--- измеримая по Лебегу функция, при этом, существуют $0<\delta_0<1$ и
постоянная $C_*>0$ такие, что
%
%\begin{equation}\label{eq0AA}
$$\sup\limits_{r\in(0,\delta_0)} \frac{1}{\pi
r^2}\int\limits_{B(z_0,r)} Q(z)\,\,dm(z) < C_*\qquad \forall\,\,
z_0\in \partial {\Bbb D}\,.$$
%\end{equation}
%
Тогда найдётся $\varepsilon_0>0$ такое, что для всех $\varepsilon\in
(0, \varepsilon_0)$ и $z_0\in
\partial {\Bbb D}$
%
%\begin{equation}\label{eq2B}
%\begin{equation}\label{eqE}
$$\int\limits_{A(z_0, \varepsilon, \varepsilon_0)}
\frac{Q(z)\,dm(z)}{|z-z_0|^2} \leqslant \frac{4\pi C_*}{\log
2}\log\frac{1}{\varepsilon}\,.$$
%\end{equation}
}
\end{lemma}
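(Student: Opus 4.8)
The plan is to obtain Lemma~\ref{lem2} as a uniform-in-$z_0$ instance of Lemma~\ref{lem1A} in the plane, with the constant function in the role of $\varphi$. Take $n=2$, so that $\Omega_n=\Omega_2=\pi$, and choose $\varphi(t)\equiv 1$ (on $[a,\infty)$ for $a$ large enough); this $\varphi$ is nondecreasing, satisfies the doubling condition~(\ref{eq1B}) with $\gamma=1$ and arbitrary $T>0$, and $\widetilde{\varphi}(t):=\varphi(1/t)\equiv 1$ is nonincreasing, so all the requirements Lemma~\ref{lem1A} imposes on $\varphi$ are met. For a fixed boundary point $z_0\in\partial{\Bbb D}$ I would apply Lemma~\ref{lem1A} with $x_0=z_0$ and $C:=C_*$. (Here $A(z_0,\varepsilon,\varepsilon_0)=\{z\in{\Bbb C}:\varepsilon<|z-z_0|<\varepsilon_0\}$, in the notation of Section~1.)

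First I would check hypothesis~(\ref{eq1AA}) of Lemma~\ref{lem1A}. With $\varphi\equiv 1$ and $\Omega_2=\pi$ it reads $\limsup\limits_{\varepsilon\rightarrow 0}\frac{1}{\pi\varepsilon^2}\int\limits_{B(z_0,\varepsilon)}Q(z)\,dm(z)\leqslant C_*$, which is immediate from the standing assumption of Lemma~\ref{lem2}; indeed that assumption gives the stronger estimate $\frac{1}{\pi r^2}\int\limits_{B(z_0,r)}Q\,dm< C_*$ for every $r\in(0,\delta_0)$, uniformly over $z_0\in\partial{\Bbb D}$, and in particular $Q\in L^1(B(z_0,\delta_0))$. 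Lemma~\ref{lem1A} then yields, for each $z_0$, some $\varepsilon_0(z_0)>0$ with
$$\int\limits_{\varepsilon<|z-z_0|<\varepsilon_0(z_0)}\frac{Q(z)\,dm(z)}{|z-z_0|^2}\leqslant C_1\cdot\log\frac{1}{\varepsilon},\qquad C_1=\frac{\gamma\,C_*\,\Omega_2\,2^{\,2}}{\log 2}=\frac{4\pi C_*}{\log 2},$$
for all sufficiently small $\varepsilon>0$.

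The only point requiring attention is that $\varepsilon_0$ and the constant $C_1$ be uniform in $z_0$, whereas Lemma~\ref{lem1A} as stated merely asserts the existence of some $\varepsilon_0$. Re-reading its proof, the constraints on $\varepsilon_0$ are: (i) $\varepsilon_0$ small enough that $Q\in L^1(B(z_0,\varepsilon_0))$ and that the supremum $\delta$ in~(\ref{eq3A}) is finite --- here our uniform bound over $r\in(0,\delta_0)$ shows that any $\varepsilon_0\leqslant\delta_0$ works and gives $\delta\leqslant C_*$ with a bound not depending on $z_0$; and (ii) $\varepsilon_0<1/2$, needed for the counting estimate~(\ref{eq6A}), again independent of $z_0$. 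So I would fix once and for all an arbitrary $\varepsilon_0\in(0,\min\{\delta_0,1/2\})$ and run the dyadic-annuli argument~(\ref{eq4A})--(\ref{eq6A}) of Lemma~\ref{lem1A} verbatim, for every $z_0\in\partial{\Bbb D}$, with this common $\varepsilon_0$ and $C=C_*$, obtaining
$$\int\limits_{A(z_0,\varepsilon,\varepsilon_0)}\frac{Q(z)\,dm(z)}{|z-z_0|^2}\leqslant\frac{4\pi C_*}{\log 2}\,\log\frac{1}{\varepsilon}\qquad\forall\,\varepsilon\in(0,\varepsilon_0),\ \forall\,z_0\in\partial{\Bbb D}.$$
I anticipate no genuine analytic obstacle: the decomposition in Lemma~\ref{lem1A} uses nothing beyond the uniform sup-bound on the averages of $Q$ over balls centred at the base point, so both the threshold $\varepsilon_0$ and the output constant are automatically uniform in $z_0$; the only thing to do carefully is to record this uniformity rather than re-prove the estimate.
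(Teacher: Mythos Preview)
Your proposal is correct and matches the paper's own proof, which simply invokes Lemma~\ref{lem1A} with $n=2$, $\varphi\equiv 1$, and $C=C_*$. Your additional discussion of why $\varepsilon_0$ and $C_1$ can be taken uniform in $z_0\in\partial{\Bbb D}$ is a useful clarification that the paper's one-line proof leaves implicit.
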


\medskip
{\it Доказательство леммы~\ref{lem2}} немедленно следует из
леммы~\ref{lem1A} при $n=2,$ $\varphi(t)\equiv 1$ и $C_*=C.$~$\Box$

\medskip
Ключевым утверждением данного раздела является следующая

\medskip
\begin{lemma}\label{lem3} {\sl Пусть $\mu:{\Bbb D}\rightarrow {\Bbb C}$ ---
измеримая по Лебегу функция, удовлетворяющая условию $|\mu(z)|<1$
для п.в. $z\in \Bbb D.$ Предположим, $f_0:{\Bbb D}\rightarrow{\Bbb
D},$ $f_0({\Bbb D})={\Bbb D},$ -- гомеоморфное решение уравнения
Бельтрами~(\ref{eq1E}), принадлежащее пространству $W_{\rm
loc}^{1,1}.$ %и удовлетворяющее условиям нормировки $f_0(0)=0,$
%$f_0(1)=1$.
Если $K_{\mu}\in L^1(\Bbb D)$ и, кроме того, найдутся
$\varepsilon_0\in(0,1)$ и $C\in[1,\infty),$ такие что
\begin{equation}\label{eq1AE}
\sup\limits_{\varepsilon\in(0,\varepsilon_0)}\frac{1}{\pi\varepsilon^2}\int\limits_{{\Bbb
D}\cap D(\zeta,\varepsilon)}K_{\mu}(z)\ dm(z) < C\qquad \forall\,\,
\zeta \in
\partial {\Bbb D}\,,
\end{equation}
то $f_0$ имеет гомеоморфное продолжение $f:\overline{\Bbb
D}\rightarrow \overline{\Bbb D},$ при этом,
$$
|f(z_2)-f(z_1)| \leqslant 64 \, \varepsilon_0^{\,-\alpha}
|z_2-z_1|^\alpha \qquad \forall\,\, z_1, z_2\in\partial{\Bbb D}:
|z_2-z_1|<\delta_0\,,
$$
где $\delta_0:=\min\left\{\frac{1}{2},\varepsilon_0^2\right\}$ и $
\alpha:=\frac{\log 2}{68 C}.$ }
\end{lemma}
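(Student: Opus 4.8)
The plan is to deduce the boundary estimate from the \emph{interior} H\"older estimate of Corollary~\ref{cor3}, applied to the map obtained by reflecting $f_0$ across $\partial{\Bbb D}$: reflection turns the boundary point $z_1$ into an interior point, and~\eqref{eq1AE} is exactly what is needed to feed Lemma~\ref{lem2} for the reflected dilatation. First I would settle the extension. Since~\eqref{eq1AE} implies the $\limsup$-hypothesis of Proposition~\ref{pr2} at every $\zeta\in\partial{\Bbb D}$, $f_0$ has a continuous extension $f\colon\overline{\Bbb D}\to\overline{\Bbb D}$ with $f(\partial{\Bbb D})=\partial{\Bbb D}$; because $f_0$ is a ring $K_\mu$-homeomorphism (Proposition~\ref{pr1}) with $K_\mu$ obeying~\eqref{eq1AE}, the standard non-collapsing modulus argument shows $f|_{\partial{\Bbb D}}$ is injective, so $f$ is a homeomorphism of $\overline{\Bbb D}$. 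Post-composing with a M\"obius automorphism of ${\Bbb D}$ (which changes neither $K_\mu$ nor~\eqref{eq1AE}) we may assume $f_0(0)=0$; the general case reduces to this one, and for clarity I work under this normalization.

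Now set ${\mathcal A}=A(0,\tfrac12,2)$ and define $g:=f$ on $\overline{\Bbb D}\cap{\mathcal A}$, $g(z):=1/\overline{f(1/\bar z)}$ on ${\mathcal A}\setminus\overline{\Bbb D}$. The two pieces agree on $\partial{\Bbb D}$ (there $|f|=1$), each lies in $W^{1,1}_{\rm loc}$, and $g$ is continuous across the smooth curve $\partial{\Bbb D}$; hence $g\in W^{1,1}_{\rm loc}({\mathcal A})$ (no distributional mass on $\partial{\Bbb D}$), and being piecewise a homeomorphic regular solution of a Beltrami equation it is a homeomorphic regular solution on ${\mathcal A}$ with $K_{\mu_g}(z)=K_\mu(z)$ for $|z|<1$ and $K_{\mu_g}(z)=K_\mu(1/\bar z)$ for $|z|>1$. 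Since $\int_{A(0,1,2)}K_\mu(1/\bar z)\,dm(z)=\int_{A(0,1/2,1)}K_\mu(w)\,|w|^{-4}\,dm(w)\le 16\,\|K_\mu\|_{L^1({\Bbb D})}$, we get $K_{\mu_g}\in L^1({\mathcal A})$, so by Proposition~\ref{pr1} $g$ is a ring $K_{\mu_g}$-homeomorphism at every point of ${\mathcal A}$ --- in particular at any $z_1\in\partial{\Bbb D}$, where ${\rm dist}(z_1,\partial{\mathcal A})=\tfrac12$. Moreover, since $f^{-1}(0)=0\in\overline{B(0,1/2)}$, a direct check gives $\overline{\Bbb C}\setminus g({\mathcal A})=f(\overline{B(0,1/2)})\cup\{1/\bar w:\ w\in f(\overline{B(0,1/2)})\}\supseteq\{0,\infty\}$, so $h(\overline{\Bbb C}\setminus g({\mathcal A}))=1$ and we may invoke Proposition~\ref{pr3}/Corollary~\ref{cor3} with $\Delta=1$.

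It remains to verify condition~\eqref{eq3} for $Q=K_{\mu_g}$ at $z_1$. For $z_1\in\partial{\Bbb D}$ and $0<r<1$, split $B(z_1,r)$ into its parts in ${\Bbb D}$ and outside; the substitution $w=1/\bar z$ has $dm(z)=|w|^{-4}\,dm(w)$ with $|w|\ge\tfrac12$ and $|w-z_1|\le|z-z_1|\le r$ on $\{|z|\ge1\}$, whence $\int_{B(z_1,r)}K_{\mu_g}\,dm\le 17\int_{B(z_1,r)\cap{\Bbb D}}K_\mu\,dm\le 17C\pi r^2$ for $r<\varepsilon_0$, uniformly in $z_1$. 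Lemma~\ref{lem2} (with $C_*=17C$) then yields, uniformly in $z_1\in\partial{\Bbb D}$, $\int_{A(z_1,\varepsilon,\varepsilon_1)}|z-z_1|^{-2}K_{\mu_g}(z)\,dm(z)\le\frac{68\pi C}{\log2}\log\frac1\varepsilon$ for small $\varepsilon$, with $\varepsilon_1$ comparable to $\varepsilon_0$. Since $\log\frac1\varepsilon\le 2\log\frac{\varepsilon_1}\varepsilon$ once $\varepsilon<\varepsilon_1^2$, condition~\eqref{eq3} holds on the relevant range with constant $\frac{136\pi C}{\log2}$, so Corollary~\ref{cor3} with $\Delta=1$ gives $h(f(z_2),f(z_1))=h(g(z_2),g(z_1))\le 32\,\varepsilon_0^{-\alpha}|z_2-z_1|^{\alpha}$ with $\alpha=2\pi\big/\tfrac{136\pi C}{\log2}=\tfrac{\log2}{68C}$, valid for $|z_2-z_1|<\delta_0=\min\{\tfrac12,\varepsilon_0^2\}$. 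Finally, as $f(z_1),f(z_2)\in\partial{\Bbb D}$ we have $|f(z_2)-f(z_1)|=2\,h(f(z_2),f(z_1))\le 64\,\varepsilon_0^{-\alpha}|z_2-z_1|^{\alpha}$, as claimed.

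The main obstacle I anticipate is the reflection step: checking that the glued map $g$ really is a homeomorphic $W^{1,1}_{\rm loc}$ regular solution of a Beltrami equation across $\partial{\Bbb D}$ --- which requires the injectivity of $f|_{\partial{\Bbb D}}$ (used even to \emph{form} $g$) and the absence of a singular part of $Dg$ on $\partial{\Bbb D}$ --- together with the geometry that makes the spherical diameter of $\overline{\Bbb C}\setminus g({\mathcal A})$ equal to $1$, so that Proposition~\ref{pr3} runs with $\Delta=1$ (this is where the normalization $f_0(0)=0$ is used). Tracking the explicit constants $64,\ 68C$ and the range $\delta_0$ through Lemma~\ref{lem2} and Corollary~\ref{cor3}, and removing the normalization, is then routine bookkeeping.
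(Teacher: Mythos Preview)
Your approach is the paper's: reflect $f$ across $\partial{\Bbb D}$, verify that the extension is a $W^{1,1}_{\rm loc}$ homeomorphic solution of a Beltrami equation, bound $\int_{D(\zeta,\varepsilon)}K_{\mu_F}$ by $17\int_{{\Bbb D}\cap D(\zeta,\varepsilon)}K_{\mu}$ via the Jacobian estimate $|w|^{-4}<16$, feed Lemma~\ref{lem2} with $C_*=17C$, use $\log(1/\varepsilon)\le 2\log(\varepsilon_0/\varepsilon)$ on $(0,\delta_0)$, then apply Corollary~\ref{cor3} and pass from $h$ to $|\cdot|$ by the factor $2$. The paper reflects to all of ${\Bbb C}$ (and spends most of the proof checking $F\in ACL({\Bbb C})$ and $|F_z|\in L^1_{\rm loc}$), whereas you reflect only to the annulus $A(0,1/2,2)$; otherwise the skeleton and every constant ($17$, $68$, $136$, $64$, $\delta_0$, $\alpha$) coincide.

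One genuine wrinkle in your write-up: the post-composition normalization $f_0(0)=0$ does give $\{0,\infty\}\subset\overline{\Bbb C}\setminus g({\mathcal A})$ and hence $\Delta=1$, but ``removing the normalization'' is \emph{not} free for the explicit constant. If $\tilde f=\varphi\circ f$ with $\varphi$ a M\"obius automorphism of ${\Bbb D}$, then $|f(z_2)-f(z_1)|\le \mathrm{Lip}(\varphi^{-1}|_{\partial{\Bbb D}})\cdot|\tilde f(z_2)-\tilde f(z_1)|$, and $\mathrm{Lip}(\varphi^{-1}|_{\partial{\Bbb D}})=(1+|f_0(0)|)/(1-|f_0(0)|)$ is unbounded in $f_0$; so you would lose the universal $64$. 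The paper avoids this by applying Corollary~\ref{cor3} to the full reflected map $F$ without normalizing---admittedly without spelling out why $\Delta=1$ holds there---so if you want the stated constant you should follow that route rather than normalize.
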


\medskip
\begin{proof} Заметим, что $f_0$ допускает
гомеоморфное продолжение $f:\overline{\Bbb D}\rightarrow
\overline{\Bbb D}$ в силу предложения~\ref{pr2}. Продолжим $f$ по
симметрии на внешность круга $\Bbb D,$ полагая
$$
F(z)=  \left \{\begin{array}{rr}  f(z) , & |z|<1\,,   \\
1/\overline{f\left(1/\overline{z}\right)}, &  \  |z|>1\,.
\end{array} \right.
$$
При помощи прямых вычислений несложно установить, что комплексная
дилатация отображения $F$ имеет вид
$$
\mu_F(z)=  \left \{\begin{array}{rr}  \mu(z) , & |z|<1\,,   \\
\frac{z^2}{\overline{z}^2}
\overline{\mu\left(1/\overline{z}\right)}, &  \  |z|>1\,.
\end{array} \right.
$$

\medskip
По условию, $f_0\in W_{\rm loc}^{1,1}(\Bbb D),$ поэтому также $F\in
W_{\rm loc}^{1,1}(\Bbb D).$ Покажем большее, а именно, что $F\in
W^{1,1} (\Bbb D)$ (другими словами, мы утверждаем, что частные
производные отображения $F$ не только локально, но и глобально
интегрируемы в $(\Bbb D)$). Для этого заметим, что
$$
|F_{\overline{z}} | \leqslant | F_z | \leqslant | F_z | +|
F_{\overline{z}} |\, \leqslant K^{\frac{1}{2}}_{\mu_F}(z)\, \,
J^{\frac{1}{2}}_{F}(z)\,.
$$
Отсюда
$$
\int\limits_{\Bbb D} \, | F_z |  \, dm(z)\, \leqslant \,
\int\limits_{\Bbb D} \, K^{\frac{1}{2}}_{\mu_F}(z)\, \,
J^{\frac{1}{2}}_{F}(z)\, dm(z) \,.
$$
Далее, применяя неравенство Гельдера, получаем
\begin{equation}\label{eq28}
\int\limits_{\Bbb D} \,|F_z|\,dm(z)\leqslant
 \left(\int\limits_{\Bbb D} \,
 K_{\mu}(z)\,dm(z)\right)^{\frac{1}{2}}\cdot
\left(\int\limits_{\Bbb D}\,J_F(z)\,dm(z)\right)^{\frac{1}{2}} \,.
\end{equation}
В силу гомеоморфности отображения  $F$, имеем
\begin{equation}\label{eq29}
\int\limits_{\Bbb D} \,J_F(z)\,dm(z)\leqslant m(F({\Bbb D}))=\pi\,
\end{equation}
см.~\cite[теоремы~3.1.4, 3.1.8 и 3.2.5]{Fe}. Учитывая условие
$K_{\mu}\in L^1(\Bbb D)$, из оценок~(\ref{eq28}) и~(\ref{eq29})
получаем, что
\begin{equation}\label{eq310}
\int\limits_{\Bbb D} \, |F_z|\,dm(z) \leqslant \left(\pi
\int\limits_{\Bbb D} \,
K_{\mu}(z)\,dm(z)\right)^{\frac{1}{2}}<\infty\,.
\end{equation}
Покажем, теперь, что $F\in W^{1,1} (D_R) $ для любого $R>1$, где
$D_R:=D(0,R)$. Действительно, по аддитивности интеграла Лебега,
имеем равенство
%
%\begin{equation}\label{eq31}
$$\int\limits_{|z|\leqslant R}\,|F_z|\,dm(z)= \int\limits_{\Bbb
D}|F_z|\,dm(z)+\int\limits_{1\leqslant|z|\leqslant
R}\,|F_z|\,dm(z)\,.$$
%\end{equation}
%
%
В силу (\ref{eq310}) имеем: $\int\limits_{\Bbb D}\,|F_z|\,
dm(z)<\infty$.

Заметим, что $F\in ACL({\Bbb C}).$ В самом деле, единичный круг
можно разбить на не более, чем счётное число прямоугольников, в
которых $F$ абсолютно непрерывна на почти всех на координатных
отрезках. Применив в каждом из прямоугольников критерий абсолютной
непрерывности через интегрирование производных,
см.~\cite[теорема~IV.7.4]{Sa}, и воспользовавшись непрерывностью $f$
в $\overline{\Bbb D},$ мы получим абсолютную непрерывность $f$ на
отрезках в $\overline{\Bbb D}$, одна из концевых точек которых может
принадлежать единичной окружности. Аналогично можно рассуждать для
области ${\Bbb C}\setminus D.$ В таком случае, абсолютная
непрерывность на линиях $F$ в ${\Bbb C}$ вытекает из
критерия~\cite[теорема~IV.7.4]{Sa} и аддитивности одномерного
интеграла Лебега.

\medskip
Осталось показать, что $\int\limits_{1<|z|\leqslant R}
|F_z|\,dm(z)<\infty.$ Заметим, что в силу гомеоморфности отображения
$F$
\begin{equation}\label{eq32}\int\limits_{1
\leqslant |z|\leqslant R} \, J_F(z)\,dm(z)\leqslant
\int\limits_{D_R} \, J_F(z)\,dm(z)\leqslant m(F(D_R))<\infty\,,
\end{equation}
см.~\cite[теоремы~3.1.4, 3.1.8 и 3.2.5]{Fe}. Далее, покажем, что
$$\int\limits_{1<|z|< R} \, K_{\mu_F}(z)\,dm(z)<\infty\,.$$
Сделав замену переменных $w= \frac{1}{\overline{z}}$ и
воспользовавшись~\cite[теорема~3.25]{Fe}, преобразуем этот интеграл
к виду:
$$\int\limits_{1\leqslant|z|\leqslant R} \,
K_{\mu_F}(z)\,dm(z)=\int\limits_{1\leqslant|z|\leqslant R} \,
K_{\mu}\left( \frac{1}{\overline{z}}\right)\,dm(z)=$$
$$
=\int\limits_{1/R\leqslant|z|\leqslant 1} \,
K_{\mu}(w)\,\frac{dm(w)}{|w|^4}\,.
$$
Следовательно,
\begin{equation}\label{eq34}
\int\limits_{1\leqslant|z|\leqslant R}\,K_{\mu_F}(z)\,dm(z)\leqslant
R^4 \int\limits_{{\Bbb D}}\, K_{\mu}(w)\, dm(w)<\infty\,.
\end{equation}
Применяя неравенство Гёльдера и оценки~(\ref{eq32}), (\ref{eq34}),
получаем:
$$
\int\limits_{1\leqslant|z|\leqslant R}|F_z|\,dm(z)\leqslant
\int\limits_{1\leqslant |z|\leqslant R} K^{\frac{1}{2}}_{\mu_F}(z)\,
\,J^{\frac{1}{2}}_{F}(z)\,dm(z) \leqslant
$$
$$\leqslant \,\left(  \int\limits_{1 \leqslant |z|\leqslant R} \,
K_{\mu_F}(z)\,\, dm(z)\right) ^{\frac{1}{2}}\left(\int\limits_{1
\leqslant |z|\leqslant R} \,  J_{F}(z)\, dm(z)\right)
^{\frac{1}{2}}<\,\infty\,.
$$
Включение $F\in W_{\rm loc}^{1,1}(\Bbb C)$ установлено.

\medskip
Оценим теперь интеграл $\int\limits_{ D(\zeta, \varepsilon)
}K_{\mu_F}(z)\, dm(z)$ при $\varepsilon\in (0, \varepsilon_0)$. Для
этого, разобьём его на две части:
$$\int\limits_{ D(\zeta, \varepsilon) }K_{\mu_F}(z)\, dm(z)=$$
\begin{equation}\label{eq38}
=\int\limits_{{\Bbb D}\cap D(\zeta, \varepsilon) }K_{\mu_F}(z)\,
dm(z)+\int\limits_{D(\zeta, \varepsilon)\setminus {\Bbb D}
}K_{\mu_F}(z)\, dm(z)\,.
\end{equation}
Сделав замену переменных $w= \frac{1}{\overline{z}}$ и
воспользовавшись~\cite[теорема~3.25]{Fe}, преобразуем второй
интеграл к виду:
$$\int\limits_{D(\zeta, \varepsilon)\setminus {\Bbb D} }K_{\mu_F}(z)\,
dm(z)=$$
%
%\begin{equation}
$$=\int\limits_{D(\zeta, \varepsilon)\setminus {\Bbb D} }K_{\mu}
\left(\frac{1}{\overline{z}}\right)\, dm(z)= \int\limits_{ D(\zeta,
\varepsilon)\cap {\Bbb D}} K_{\mu} \left(w\right)\,
\frac{dm(w)}{|w|^4}\,.$$
%\end{equation}
%
Проверим следующее неравенство:
$$
\max\limits_{w\in D(\zeta, \varepsilon)\cap {\Bbb D}}
\frac{1}{|w|^4}< 16\,,\qquad \varepsilon\in \left(0,\frac{1}{2}
\right)\,.
$$
Действительно,
$$
\max\limits_{|w-\zeta|= \varepsilon} \frac{1}{|w|^2}=
\max\limits_{\varphi\in [0,2\pi)} \frac{1}{|\zeta+\varepsilon
e^{i\varphi}|^2}=\max\limits_{\varphi\in [0,2\pi)}
\frac{1}{|\zeta|^2+2\varepsilon Re(\zeta e^{-i\varphi})
+\varepsilon^2 }=
$$
\begin{equation}\label{eq6}
=\max\limits_{\varphi\in [0,2\pi)} \frac{1}{1+2\varepsilon
\cos(\varphi-\vartheta) +\varepsilon^2 }=
\frac{1}{(1-\varepsilon)^2}<4\,,
\end{equation}
где $w= \zeta+\varepsilon e^{i\varphi}, \zeta=e^{i\vartheta}$.

\medskip
Таким образом, получаем:
$$
\int\limits_{D(\zeta, \varepsilon)\setminus {\Bbb D} }K_{\mu_F}(z)\,
dm(z)\leqslant \max\limits_{w\in D(\zeta, \varepsilon)\cap {\Bbb D}}
\frac{1}{|w|^4}\int\limits_{D(\zeta, \varepsilon)\cap {\Bbb D}
}K_{\mu} \left(w\right)\, dm(w)<$$$$<16 \int\limits_{D(\zeta,
\varepsilon)\cap {\Bbb D} }K_{\mu} \left(w\right)\, dm(w)\,.
$$
Учитывая оценку~(\ref{eq6}) и равенство (\ref{eq38}), мы получим
отсюда, что
\begin{equation}\label{eq7A}
\int\limits_{D(\zeta, \varepsilon) }K_{\mu_F}(z)\, dm(z) <17
\,\int\limits_{D(\zeta, \varepsilon)\cap {\Bbb D}}K_{\mu}(w)\,
dm(w)\,. \end{equation}
Из условий~(\ref{eq1AE}) и (\ref{eq7A}) вытекает, что
$
\sup\limits_{\varepsilon\in(0,\varepsilon_0)}\frac{1}{\pi\varepsilon^2}\int\limits_{
D(\zeta, \varepsilon)} K_{\mu_F}(z)\, dm(z)<17
\sup\limits_{\varepsilon\in(0,\varepsilon_0)}\frac{1}{\pi\varepsilon^2}\int\limits_{
{\Bbb D}\cap D(\zeta, \varepsilon) }K_{\mu}(z)\, dm(z)<17 C.$ Далее,
применяя лемму~\ref{lem2} при $C_*=17 C,$ при некотором
$0<\varepsilon_0<1$ получаем оценку
$$\int\limits_{ A(z_0, \varepsilon, \varepsilon_0)}
\frac{K_{F_{\mu}}(z)\,dm(z)}{|z-z_0|^2} \leqslant \frac{68\pi
C}{\log 2}\left(\log\frac{1}{\varepsilon}\right),\quad\forall\,\,
\varepsilon\in (0, \varepsilon_0)\,,\quad \forall\,\,z_0\in
\partial {\Bbb D}\,.$$

Заметим, что
$ \frac{\log\frac{1}{\varepsilon}} {\log
\left(\frac{\varepsilon_0}{\varepsilon}\right)}= 1+\frac{\log
\frac{1}{\varepsilon_0}} {\log
\left(\frac{\varepsilon_0}{\varepsilon}\right)}<2 $ для всех
$\varepsilon\in (0, \delta_0).$ Тогда
$$\log
\left(\frac{\varepsilon_0}{\varepsilon}\right)^{\,-1}\cdot\int\limits_{
A(z_0, \varepsilon, \varepsilon_0  )}
\frac{K_{F}(z)\,dm(z)}{|z-z_0|^2}\leqslant \frac{68\pi C}{\log 2}\,
\frac{\log\frac{1}{\varepsilon}} {\log
\left(\frac{\varepsilon_0}{\varepsilon}\right)}\leqslant
\frac{136\pi C}{\log 2}\,. $$

Окончательно, полагая $\varepsilon:=|z_2-z_1|$ и применяя
следствие~\ref{cor3} с учётом предложения~\ref{pr1}, получаем
оценку:
%
%\begin{equation}\label{eq2.4.4}
$$h(f(z_1),f(z_2))\leqslant 32 \,\varepsilon_{0}^{\,-\alpha}
\,|z_1-z_2|^{\alpha}\,,\qquad \alpha=\frac{\log 2}{68 C}\,.$$
%\end{equation}
%
Поскольку $z_1$ и $z_2\in
\partial {\Bbb D},$ мы получим, что
$$|f(z_1)-f(z_2)|\leqslant 64 \,\varepsilon_{0}^{\,-\alpha}
\,|z_1-z_2|^{\alpha}\,.$$ Лемма доказана.
\end{proof}

\medskip
В завершение нашей статьи рассмотрим следующий результат.

\medskip
\begin{theorem}\label{th3}
{\sl Пусть $\mu:{\Bbb D} \rightarrow  {\Bbb D}$ -- измеримая по
Лебегу функция и $f:{\Bbb D}\rightarrow {\Bbb D}$
--- гомеоморфное решение уравнения~(\ref{eq1E}), принадлежащее классу
$W_{\rm loc}^{1,1}$. Если $K_{\mu}\in L^1(\Bbb D)$ и, кроме того,
найдутся $\varepsilon_0\in(0,1)$ и $C\in[1,\infty)$ такие, что
%
%\begin{equation}\label{eq12AA}
$$\sup\limits_{\varepsilon\in(0,\varepsilon_0)}\frac{1}{\pi\varepsilon^2}\int\limits_{{\Bbb
D}\cap D(\zeta,\varepsilon)}K_{\mu}(z)\, dm(z)< C\qquad \forall\,\,
\zeta \in
\partial {\Bbb D}\,,$$
%\end{equation}
%
то $f$ имеет гомеоморфное продолжение на $\partial {\Bbb D},$
являющееся непрерывным по Гёльдеру.}
\end{theorem}

\medskip
\begin{proof}
%Можно считать, что $f(0)=0$ и $f(1)=1.$ В противном случае, применим
%вспомогательное дробно-линейное отображение $\gamma$ расширенной
%комплексной плоскости $\overline{{\Bbb C}}$ на себя, сохраняющее
%единичный круг, такое, что $\gamma(f(0))=0$ и $\gamma(f(1))=1.$
%
%Далее,
В обозначениях леммы~\ref{lem3},  при $|z_1-z_2|\geqslant\delta_0$
имеем тривиальную оценку
%
%\begin{equation}
$|f(z_2)-f(z_1)| \leqslant 2=\frac{2}{\delta_0^{\alpha}}\,
\delta_0^{\alpha}\leqslant
\frac{2}{\delta_0^{\alpha}}\,|z_1-z_2|^{\alpha}.$
%\end{equation}
Положим $L:=\max\left\{\frac{2}{\delta_0^{\alpha}}, 64
\,\varepsilon_{0}^{-\alpha}\right\}.$ Тогда по лемме~\ref{lem3}
$$|f(z_2)-f(z_1)| \leqslant L\,|z_1-z_2|^{\alpha}\qquad \forall z_1, z_2 \in \partial {\Bbb D}: |z_2-z_1|<\delta_0\,,$$
что и требовалось установить.
\end{proof}

\medskip\medskip\medskip
КОНТАКТНАЯ ИНФОРМАЦИЯ

\medskip
\noindent{\bf Рязанов Владимир Ильич} \\
Институт прикладной математики и механики НАН Украины,
ул.~Добровольского~1, г.~Славянск, Украина, 84100,
\\ e-mail: vlryazanov1@rambler.ru

\medskip
\noindent{\bf Салимов Руслан Радикович} \\
Институт математики НАН Украины, ул. Терещенковская~3, г.~Киев-4,
Украина, 01601, \\ e-mail: ruslan.salimov1@gmail.com

\medskip
\noindent{\bf Севостьянов Евгений Александрович} \\
Житомирский государственный университет имени Ивана Франко, ул.
Большая Бердичевская~40, г.~Житомир, Украина, 10008, \\ e-mail:
esevostyanov2009@gmail.com

\end{document}